\title{Divisibility of characteristic numbers}
\author{Simone Borghesi}
\address{Universit\'a degli Studi di Milano-Bicocca\\
Dipartimento di Matematica e Applicazioni\\\newline
via Cozzi 53\\
20125 Milano\\
Italy}
\email{mandu2@libero.it}
\urladdr{}
\def\cnewtheorem#1[#2]#3{\newtheorem{#1}{#3}[section]
\expandafter\let\csname c@#1\endcsname\c@theorem}
\let\xysavmatrix\xymatrix
\def\xymatrix{\disablesubscriptcorrection\xysavmatrix}
\newtheorem{theorem}{Theorem}[section]
\theoremstyle{remark}
\newcommand{\q}{{\mathbb Z}/q}
\newcommand{\p}{{\mathbb Z}/p}
\newcommand{\am}{{\mathcal{A}}^{**}}
\newcommand{\eilmac}{{\bf H}_{\q}}
\newcommand{\eilmacz}{{\bf H}_{\mathbb{Z}}}
\newcommand{\nin}{\not\kern-4pt\in}
\newcommand{\MU}{{\bf MU}}
\newcommand{\mgl}{{\bf MGl}}
\newcommand{\ph}{\Phi}
\begin{document}

\begin{abstract}
We use homotopy theory to define certain rational coefficients
 characteristic numbers with integral values, depending on a given
prime number $q$ and positive integer $t$. We prove the first
nontrivial {\it degree formula} and use it to show that existence
of morphisms between
algebraic varieties for which these numbers are not divisible by $q$
give information on the degree of such morphisms or on zero cycles of
the target variety.
\end{abstract}

\maketitle

%%%%%%%%%%%%%%%%%%%%%%%%%%%%%%%%%%%%%%%%%%%%%%%%%%%%%%%%

\section{Introduction}

Given a complex vector bundle $V$ on a smooth, compact complex
manifold $M$ of dimension $m$, we can associate a collection of
integers $\{n_I\}_I$ which are bundle isomorphisms invariant.
These numbers are obtained as follows: let $I=(i_1,i_2,\ldots,
i_r)$ and $f_I(c_j)=c_1^{i_1}(V)c_2^{i_2}(V)\ldots c_r^{i_r}(V)$
be a monomial of degree $m$ in the Chern classes of $V$  (we set
$c_j$ to have degree $2j$). Then $n_I$ is defined to be $\langle
f_I(c_j), [M]_{\eilmacz}\rangle$, that is the Kronecker pairing of
$f_I(c_j)$
 with the integral coefficients fundamental homology class of
 $M$. We will deal with numbers associated with two special bundles: one is
 the tangent bundle $T_M$, and the other is the normal bundle $\nu_i$ to a
 closed embedding $i\co M\hookrightarrow\mathbb{R}^N$ for $N$ large
 enough.  By the Thom--Pontryagin construction, if $[M]$ denotes the
complex cobordism
 class of $M$, $\MU$  the complex cobordism spectrum and $h$ is
its Hurewicz homomorphism $\pi_*(\MU)\to H_*(\MU,\mathbb{Z})$,
then the coefficients of the polynomial generators of
$H_*(\MU,\mathbb{Z})$ in the expression of $h[M]$ are certain
numbers $\langle
s_{(\alpha_1,\alpha_2,\ldots,\alpha_r)}(\nu_i),[M]_{\eilmacz}\rangle$
associated to $\nu_i$. These numbers are sometimes referred to as
{\it the characteristic numbers of $M$}. Notice that, by
definition, $T_M+\nu_i=0$ in $K_0(M)$, the Grothendieck group of
vector bundles on $M$. Thus such numbers depend just on $M$ (in
fact on the complex cobordism class of $M$). Since the image of
$h$ is completely known (see Stong \cite{stong}), we get information on such
numbers, for instance, if
$s_{(\alpha_1,\alpha_2,\ldots,\alpha_r)}(M)$ is the coefficient of
$b_1^{\alpha_1}b_2^{\alpha_2}\ldots b_r^{\alpha_r}\in
H_*(\MU,\mathbb{Z})$ in $h[M]$, then  we know that $p$ always
divides $s_{(0,0,\ldots,1)}(M)$, where the $1$ entry is in the
$(p^t{-}1)$th place, for any prime
%\marginpar{Is there a more compact notation for
%$s_{(0,0,\ldots,\stackrel{p^t-1}{1})}(M)$?  The existing notation forces
%adjacent lines too far apart.}
number $p$ and positive integer $t$. 
We will denote such an integer as $s_{p^t-1}(M)$.
More in general, for each
choice of polynomial generators of $\pi_*(\MU)$, the coefficients
of $[M]$ are integers and can be expressed in function of
characteristic numbers of $M$ itself, which can be written as a
rational linear combination of Chern numbers (eg expression
\eqref{divis} for algebraic surfaces). This way one gets
divisibility properties of Chern numbers, much like,
by the Grothendieck--Riemann--Roch formula, we deduce divisibility
properties of the Todd numbers. In a sense, such divisibility is
maximal and the varieties which make it maximal enjoy surprising
properties. In the case of
$s_{p^t-1}(M)$, Voevodsky remarked that
the algebraic varieties for which that number is {\it not}
divisible by $p^2$ are very special. Given a smooth algebraic
variety $X$ over a field $k$ we can talk about {\it characteristic
numbers} by considering the polynomials in the Chern classes of
the tangent bundle of $X$,
$s_{(\alpha_1,\alpha_2,\ldots,\alpha_r)}(\nu_i)=s'_{(\alpha_1,\alpha_2,
\ldots,\alpha_r)}(T_X)$. They are represented by zero dimensional
cycles, that is integral linear combinations $z=\sum_un_uz_u$,
where $z_u\co \Spec k_u\to X$ are (closed) points of $X$ (that is, $k_u$
are finite field extensions of $k$). An integer can be associated
to  $z$: the {\it degree} of $z$ (see \fullref{zerocycle}),
denoted with $\deg(z)$. The integer $\deg s'_{(\alpha_1,\alpha_2,
\ldots,\alpha_r)}(T_X)$ plays the role of $s_{(\alpha_1,\alpha_2,
\ldots,\alpha_r)}(M)$. This statement can be made more precise: if
$X$ admits a closed embedding $X\hookrightarrow 
\mathbb{P}^N$ (such varieties are said {\it projective}) and $k$ admits an
embedding in the complex numbers, then $\deg
s'_{(\alpha_1,\alpha_2,\ldots,\alpha_r)}(T_X)$ equals
$s_{(\alpha_1, \alpha_2,\ldots,\alpha_r)}(X(\mathbb{C}))$, where
$X(\mathbb{C})$ is the topological space defined by the same
equations as $X$ except that we see them having coefficients in
the complex numbers by means of one of the embeddings of $k$ in
$\mathbb{C}$. It can be proved that these integers do not depend
on the field embedding. Under these assumption on the base field
$k$, Voevodsky noted that, if $p^2$ does not divide
$s_{p^t-1}(Y)$ and $f\co Y\to X$ is any
algebraic morphism to a smooth, projective variety $X$ of nonzero
dimension less or equal than $Y$, one of the following must hold:
(i) $f$ is surjective and $\dim(X)=\dim(Y)=p^t-1$ or (ii) there
exists a closed point $\Spec L\to X$ with $p$ not dividing $[L:k]$.
This result follows from a sharper statement, now known as {\it
degree formula}: keeping the same assumptions, we have that
$(1/p)s_{p^t-1}(Y)\equiv\deg(f)(1/p)s_{p^t-1}(X)\mod p$, or else
$X$ has a point not divisible by $p$. This manuscript is based on
a talk I gave at the 2003 International Conference of Algebraic Topology
held in Kinosaki, Japan
in honor of Goro Nishida's 60th birthday. We will go through Voevodsky's
original idea which gave rise to such formula by using homotopy
theory to define characteristic numbers with rational coefficients
and discuss Rost's use of this formula on quadrics proving results
originally due to Hoffmann and Izhboldin (see
Merkurjev~\cite{rost-deg-for}).
Degree formulae involving each a different set of characteristic
numbers, with various {\it obstruction ideals} associated to them,
have also been derived by Rost (top Segre numbers)
\cite{rost-deg-for} and Merkurjev~\cite{mer} over any field, by
the author in \cite{io-grado} over perfect fields and by Levine
and Morel~\cite{lev-mor} over fields of characteristic zero. All
of them can be used to yield proofs of the results of Hoffmann and
Izhboldin and as indicated by Rost.

\section{Characteristic numbers and the spectrum $\ph_1$}

Voevodsky defined a cohomology theory \cite{voe-rosso} on algebraic
varieties $H^{*,*}(-,A)$ which is expected to play the role
of singular cohomology in topology, as prescribed by the {\it
  Beilinson conjectures}. This cohomology theory is called {\it motivic
cohomology} with coefficients in an abelian group $A$, and, when
evaluated on an algebraic variety, it is equipped
of a tautological structure of left module over the (bistable) motivic
cohomology operations $\am_m$. In the case $A=\p$ and perfect base
field, Voevodsky proved
that $\am_m$ contains the bigraded  Hopf $\p$ algebra $H^{*,*}(\Spec  k,
\p)\otimes_{\p}\smash{\mathcal{A}^{*,*}_{\top}}:=\am$ where
$\smash{\mathcal{A}^{*,*}_{\top}}$ is the topological Steenrod algebra with an
appropriate bigrading \cite{voe-reduced}. In particular, for a fixed prime
$p$, we have the left multiplication
$$Q_t\cdot\co H^{*,*}(-,\p)\to H^{*+2p^t-1,*+p^t-1}(-,\p)$$
by the Milnor operation $Q_t\in\mathcal{A}^{*,*}_{\top}$. In our approach
we will use the triangulated category
$\mathcal{SH}(k)$ that shares some good properties with the ordinary
stable homotopy category. Adopting Voevodsky's definition of
motivic cohomology, if $k$ is a perfect field, $H^{*,*}(-,A)$ is a representable functor in $\mathcal{SH}(k)$. The
representing object is denoted by $\mathbf{H}_A$ and is called the
{\it motivic Eilenberg--MacLane spectrum with coefficients in $A$}.
A fundamental fact of this cohomology theory is that, if $k$ is a
perfect field and $X$ is a smooth scheme, $H^{2*,*}(X,A)$ is
isomorphic to the Chow group $CH^*(X)\otimes_{\mathbb{Z}} A$.
 To any algebraic
variety $X$ we can associate a graded ring $CH^*(X)$. Each homogeneous
component $CH^n(X)$ is defined as the quotient of the free group over
the codimension $n$ closed subvarieties in $X$ modulo {\it rational
equivalence}. The product
of two classes $Z_1$ and $Z_2$ is represented by the closed
subscheme obtained as the intersection of two representatives of $Z_1$
and $Z_2$ intersecting {\it properly}. If $\mathbb{P}V$ is the
projectivization of a
rank $n+1$ vector bundle $V\to X$, then
\begin{equation}
CH^*(\mathbb{P}V)\cong\dfrac{CH^*(X)[t]}{(p(t))}
\end{equation}
as left $CH^*(X)$ algebra, where $p(t)$ is a monic degree $n$
polynomial. If $p(t)=t^n+c_1t^{n-1}+\cdots +c_{n-1}t+c_n$, we can let
$c_i$ to be the $i$th Chern class of $V$, which will be therefore
represented by some integral coefficient linear combination of
codimension $i$ closed subvarieties of $X$.

In algebraic geometry there is an analogue concept of what it is known
as {\it characteristic numbers} in homotopy theory.
To a $n$--tuple of nonnegative integers $\alpha=(\alpha_1,\ldots,
\alpha_n)$ with
$0\leq\alpha_1\leq\alpha_2\leq\cdots\leq\alpha_n$
we associate the symmetric polynomial $g_{\alpha}(\mathbf{t})$
in the variables $t_1,\ldots, t_z$ for $z\geq\sum\alpha_i$.
This is the unique symmetric polynomial having monomials
with $\alpha_1$ variables raised to the power $1$,
$\alpha_2$ variables to the power $2$ and so on. Let
$f_\alpha(\sigma_1,\sigma_2,
\ldots,\sigma_n)$ be the polynomial in the elementary
symmetric functions $\sigma_i(\mathbf{t})$ such that
$f_\alpha(\mathbf{t})=g_\alpha(\mathbf{t})$.
\begin{definition}\label{s-alpha}
Let $M$ be an algebraic variety of pure dimension $m$ and
$V\to M$ be a vector bundle. For a $n$--tuple of nonnegative integers
$\alpha=(\alpha_1,\ldots,\alpha_n)$ such that $\sum_{j=1}^nj\alpha_j=m$,
\begin{enumerate}
\item  set $s_\alpha(c_1,\ldots,c_n)=
f_\alpha(\sigma_1,\ldots,\sigma_n)$
by formally replacing the variables $\sigma_i$ with $c_i(V)$, the $i$th
Chern class of $V$.
\item The zero cycle
$s_\alpha(V)$ is defined as $s_\alpha(c_1(V),
c_2(V),\ldots c_n(V))$.
\end{enumerate}
\end{definition}

\begin{definition}\label{zerocycle}
Given a zero dimensional cycle $z$ in an algebraic variety $X$ over a
field $k$, that is, an
integral coefficients linear combination
$$\sum_{i=1}^n\lambda_i\Spec L_i$$ of closed points $\Spec L_i\to X$, we
define the {\it degree} of $z$ to be the integer
$\sum_{i=1}^n\lambda_i[L_i:k]$.
\end{definition}

We recall that to each prime number $q$ and positive integer $t$,
there is a canonical motivic cohomological operation $Q_t$ of
degree $(2q^t-1,q^t-1)$.
In the early version of Voevodsky's proof of the Milnor Conjecture
\cite{voe-mil}, the operation $Q_t$ appeared implicitly in an argument
employing the homology theory $(\ph_1)_{*,*}(-)$ represented
by the object defined by the exact triangle $$\ph_1\to\eilmac
\stackrel{Q_t}{\to}\Sigma^{2q^t-1,q^t-1}\eilmac$$
The spectrum $\ph_1$
is related to the number
$(1/q)s_{q^t-1}$.  To show this, we are going to consider the
spectrum $\mgl\in\mathcal{SH}(k)$, formally defined in the same
way as in homotopy theory: $\mgl_n$ is the Thom space of the
universal $n$--plane bundle over the infinite grassmanian Gr$_n$.
The definition of the structure morphisms is slightly more subtle
than in homotopy theory. The spectrum $\mgl$ shares the same nice
ring object properties as $\MU$ does in classical homotopy theory. In
particular, for any smooth, projective variety $X$ of pure
dimension $n$, there exists a canonical {\it fundamental homology
class} $[X]_\mgl\in \mgl_{2n,n}(X)$ (see \cite{io-tesi} or
\cite{io-grado}). The graded group
$H^{*,*}(\mgl,\q)$ has been entirely computed in \cite{io} on
characteristic zero fields and in \cite{io-perf}, more generally, on
perfect fields. Since $H^{i,j}(\mgl,\q)$ is zero for $i>2j$, for each $r$,
there exists a map $e_1\co \mgl\to \ph_1$, lifting the {\it
Thom class}: $\mgl\to \eilmac$.

\begin{theorem}\label{bir}
Let $i\co k\hookrightarrow\mathbb{C}$ be  field and
$p\co X\to \Spec k$ be the structure morphism of a projective, smooth
variety $X$ of pure dimension. For each
prime number $q$ and positive integer $t$, let $\ph_1$ be the
corresponding spectrum. Then there exists a choice of $e_1$ such that
$$(e_1)_*p_*[X]_\mgl=
\begin{cases}
\deg(X) & \mathrm{if}\; \dim(X)=0\\
-\tfrac{1}{q}\deg s_{q^t-1}(T_X) & \mathrm{if}\; \dim(X)=q^t-1\\
0 & \mathrm{otherwise}
\end{cases}
$$
\end{theorem}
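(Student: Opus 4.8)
The plan is to extract the three cases from the homotopy long exact sequence of the defining triangle $\ph_1\to\eilmac\stackrel{Q_t}{\to}\Sigma^{2q^t-1,q^t-1}\eilmac$, after pushing the fundamental class forward to the point, and then to pin down the single nontrivial value by a comparison with the classical $\MU$--computation through complex realization. Writing $n=\dim X$ and $d=n-(q^t-1)$, the element $(e_1)_*p_*[X]_\mgl$ lives in $\pi_{2n,n}(\ph_1)$, and applying $\pi_{2n,n}(-)$ to the triangle gives
$$\pi_{2n+1,n}(\Sigma^{2q^t-1,q^t-1}\eilmac)\stackrel{g_*}{\to}\pi_{2n,n}(\ph_1)\stackrel{f_*}{\to}\pi_{2n,n}(\eilmac)\stackrel{(Q_t)_*}{\to}\pi_{2n,n}(\Sigma^{2q^t-1,q^t-1}\eilmac).$$
Using $\pi_{a,b}(\eilmac)=H^{-a,-b}(\Spec k,\q)$ together with the vanishing of motivic cohomology of $\Spec k$ in negative weights and of $CH^w(\Spec k)\otimes\q$ for $w>0$, the right-hand term $H^{-2n,-n}(\Spec k,\q)$ is $0$ for $n>0$, while the left-hand term is $H^{-2d,-d}(\Spec k,\q)$. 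The latter vanishes unless $d=0$: for $d>0$ by negative weight, and for $d<0$ because it then computes $CH^{-d}(\Spec k)\otimes\q$ with $-d>0$. Hence for $n>0$, $n\neq q^t-1$ we get $\pi_{2n,n}(\ph_1)=0$, forcing the value to be $0$ and settling the ``otherwise'' case.

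For $n=0$ the same sequence shows $f_*\co\pi_{0,0}(\ph_1)\to\pi_{0,0}(\eilmac)=\q$ is an isomorphism, since both flanking groups vanish as above. As $e_1$ lifts the Thom class through $f$, the class $(e_1)_*p_*[X]_\mgl$ is carried by $f_*$ to the image of $p_*[X]_\mgl$ under the Thom class $\mgl\to\eilmac$; evaluated on the fundamental class of a zero--dimensional $X$ this returns $\deg(X)$ reduced mod $q$, as claimed.

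The substantial case is $n=q^t-1$, where the left term is $H^{0,0}(\Spec k,\q)=\q$ and $g_*$ surjects onto $\pi_{2n,n}(\ph_1)\cong\q$, while $f_*$ is zero (its target vanishes). Thus $(e_1)_*p_*[X]_\mgl=g_*(\xi)$ for a class $\xi\in\q$, and the task is to identify $\xi$ with $-\tfrac1q\deg s_{q^t-1}(T_X)$. I would do this by transporting the computation to topology via the complex realization determined by $i\co k\hookrightarrow\mathbb{C}$: it sends $\mgl\mapsto\MU$, $\eilmac\mapsto H\q$ and $Q_t$ to the topological Milnor primitive, hence $\ph_1$ to the classical fibre $\ph_1^{\mathrm{top}}$ and, for a suitable lift, $e_1$ to a lift $e_1^{\mathrm{top}}$ of the topological Thom class, while carrying $p_*[X]_\mgl$ to $p_*[X(\mathbb{C})]_\MU$. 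On homotopy groups it induces an isomorphism $\pi_{2n,n}(\ph_1)\stackrel{\cong}{\to}\pi_{2q^t-2}(\ph_1^{\mathrm{top}})\cong\q$, because on both sides the group is the connecting image of the degree-- and weight--zero generator, and realization sends one generator to the other. It then suffices to compute $(e_1^{\mathrm{top}})_*p_*[X(\mathbb{C})]_\MU$, which is the classical computation yielding $-\tfrac1q s_{q^t-1}(X(\mathbb{C}))$; combined with the identity $\deg s_{q^t-1}(T_X)=s_{q^t-1}(X(\mathbb{C}))$ recalled in the introduction, this gives the stated value.

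The main obstacle is precisely this last identification: recovering the normalization, the factor $1/q$ together with its sign. The $1/q$ is invisible to any primary operation---it is a secondary phenomenon forced by the divisibility $q\mid s_{q^t-1}$---so I expect either to run the Bockstein bookkeeping attached to the relation presenting $Q_t$ (equivalently, to analyze the action of $Q_t$ on the Thom class inside the known ring $\hb(\mgl,\q)$ and lift it one step integrally), or, as above, to borrow the corresponding normalized classical computation for $\MU$ and verify that the chosen $e_1$ realizes to the standard $e_1^{\mathrm{top}}$. Since the lifts of the Thom class form a torsor over $H^{2q^t-2,q^t-1}(\mgl,\q)$ (the obstruction in $H^{2q^t-1,q^t-1}(\mgl,\q)$ vanishing because $2q^t-1>2(q^t-1)$), fixing $e_1$ within this indeterminacy so as to match the topological normalization is the delicate point; once the two copies of $\q$ are generated compatibly, the formula follows.
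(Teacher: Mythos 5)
Your handling of the easy cases is correct, and in fact cleaner than the paper's: you get the vanishing for $0<\dim X\neq q^t-1$ and the identification $\pi_{0,0}(\ph_1)\cong\q$ purely from the motivic long exact sequence and the vanishing of $H^{p,q}(\Spec k,\q)$ in negative weight and positive-codimension Chow degree, whereas the paper obtains these cases by base-changing to $\mathbb{C}$, realizing topologically, and quoting $(\ph_1^{\top})_{2*}(\pt)=0$ unless $*=0,\,q^t-1$. Your reduction of the main case to topology (base change along $i$, then the realization functor, using that $t_{\mathbb{C}}$ is an isomorphism on the relevant homotopy groups) also matches the paper.

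The gap is in the case $\dim X=q^t-1$, where you invoke ``the classical computation yielding $-\tfrac1q s_{q^t-1}(X(\mathbb{C}))$'' and then frame what remains as a normalization problem (sign, factor of $q$, matching generators of the two copies of $\q$). There is no lift-independent classical computation to cite: the value $(e_1^{\top})_*p_*[X^{\top}]_{\MU}$ genuinely depends on the choice of lift of the Thom class, not merely up to normalization. The paper's own example for $q=3$, $t=1$ (the formula with the free parameter $\lambda$) shows that different lifts produce genuinely different rational linear combinations of Chern numbers, so the clause ``there exists a choice of $e_1$'' is the substance of the theorem, and proving it requires controlling the decomposable part of $p_*[X^{\top}]_{\MU}$, which your proposal never mentions. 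Concretely, the paper's argument is: (a) choose Lazard-ring generators with $h(a_i)=b_i+\cdots$ for $i\neq p^u-1$ and $h(a_i)=p(b_i+\cdots)$ for $i=p^u-1$, and expand $p_*[X^{\top}]_{\MU}=m\,a_{q^t-1}+(\text{decomposables})$; Thom--Pontryagin identifies the $b_\alpha$--coefficients of $h(p_*[X^{\top}]_{\MU})$ with $\deg s_\alpha(-T_{X_{\mathbb{C}}})$, so $m=\tfrac1q\deg s_{q^t-1}(-T_X)$; (b) $(e_1^{\top})_*a_{q^t-1}=1$ for any lift; (c) decomposables whose Hurewicz image is divisible by $q$ are killed by every lift, while the value of $(e_1^{\top})_*$ on a decomposable with Hurewicz image not divisible by $q$ can be shifted arbitrarily by replacing $e_1^{\top}$ with $e_1^{\top}+c$, where $c\co\MU\to\Sigma^{2(q^t-1)}\eilmac^{\top}\to\ph_1^{\top}$ is a multiple of the dual of a suitable $b_\alpha$; hence a lift exists annihilating all decomposable contributions, and for that lift the answer is $\tfrac1q\deg s_{q^t-1}(-T_X)=-\tfrac1q\deg s_{q^t-1}(T_X)$ (the sign coming from the fact that Thom--Pontryagin produces normal-bundle numbers and $s_i$ is dual to an indecomposable). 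Your torsor observation about lifts differing by $H^{2q^t-2,q^t-1}(\mgl,\q)$ is exactly the mechanism needed for (c), but you deploy it only to compare algebraic and topological generators; without steps (a)--(c) the central computation, and with it the existence claim of the theorem, remains unproved.
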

\begin{proof}
Instead of considering $X$, we can argue on the algebraic
variety $X_{\mathbb{C}}^i$ given by the limit of the diagram
$$
\xymatrix{ & X\ar[d]^p\\\Spec \mathbb{C}\ar[r]^{i^*} & \Spec k}
$$
In fact, for any weighted degree $n$ homogeneous polynomial $a(x_u)$
(that is, we set the degree of $x_u$ to be $2u$) and any field embedding
$e\co k\hookrightarrow L$, we have
$\deg(a(c_u(T_X)))=\deg(a(c_u(T_{X_L})))$, and, in particular, this
integer does not depend on the field embedding $e$. The advantage of
considering $X_{\mathbb{C}}^i$ is that this algebraic variety has a
canonical complex manifold associated: $X^{\top}:=X(\mathbb{C},i)$, that has
the points $\Spec \mathbb{C}\to X_{\mathbb{C}}^i$ as underlying set. We
recall that under these assumptions there is a {\it topological
realization functor} (see Morel and Voevodsky \cite{morvoe}) which
induces morphisms
$t_{\mathbb{C}}\co \mgl_{2*,*}(X)\to\MU_{2*}(X^{\top})$ and $t_{\mathbb{C}}
\co CH^*(X_{\mathbb{C}}^i)\otimes A=H^{2*,*}(X,A)\to H^{2*}(X^{\top},A)$.
This morphisms are compatible with the Kronecker product in the sense
that $\langle z, [X_{\mathbb{C}}^i]_{CH}\rangle=\langle t_{\mathbb{C}}z,
[X^{\top}]_H\rangle$ for $z\in CH_0(X_{\mathbb{C}}^i)$. Since
$\langle \smash{s_\alpha(T_{X_{\mathbb{C}}^i})}, [X_{\mathbb{C}}^i]_{CH}\rangle
=\deg(\smash{s_\alpha(T_{X_{\mathbb{C}}^i})})$, this
shows that the usual topological characteristic numbers $\langle
s_\alpha, [X^{\top}]_H\rangle$ are independent on the field embedding
$i\co k\hookrightarrow \mathbb{C}$ and equal to the {\it algebraic}
characteristic numbers in \fullref{s-alpha}. Let now fix a
choice of $e_1\co \mgl\to\ph_1$ lifting the Thom class which induces
$e_1^{\top}\co \MU\to\ph_1^{\top}$. The morphism
$t_{\mathbb{C}}\co (\ph_1)_{2*,*}(\Spec \mathbb{C})\to
(\ph_1^{\top})_{2*}(\pt)$ is an isomorphism for any $*$, thus we will
consider $p_*[X^{\top}]_{\MU}\in\MU_{2*}(\pt)$. This ring is isomorphic
to the Lazard ring $\mathbb{Z}[1,a_1,a_2,\ldots]$ with generators that
can be chosen to have the following properties: let $h$ be the
Hurewicz homomorphism, if $i\neq p^u-1$ for any prime $p$ and positive
integer $u$, then $h(a_i)=b_i+\cdots$ and if $i=p^u-1$,
$h(a_i)=p(b_i+\cdots)$, where $b_i$ are the indecomposable elements of
$H_{2*}(\MU,\mathbb{Z})$. Using the classical Thom
Pontryagin construction and the above discussion on characteristic
numbers, we see that the coefficients of
$b_\alpha=b_1^{\alpha}b_2^{\alpha_2}\ldots b_m^{\alpha_m}$ in $h(p_*
[X^{\top}]_{\MU})$ are precisely $\deg s_\alpha(-T_{X_{\mathbb{C}}})$, where
$-T_{X_{\mathbb{C}}}$ is the opposite virtual bundle of $T_{X_{\mathbb{C}}}$ in
$K_0(X_{\mathbb{C}})$. Since $(\ph_1^{\top})_{2*}(\pt)=0$ unless $*=0$
and $*=q^t-1$, we may assume $\dim(X)=n=q^t-1$, because the statement
of \fullref{bir} in the case of zero dimensional $X$ is
essentially tautological. If $\dim(X)=q^t-1$, then $p_*[X^{\top}]_{\MU}=m
a_{q^t-1}+\decomposables$, for an integer $m$. $h(p_*[X^{\top}]_{\MU})=
mqb_{q^t-1}+\cdots$ and $(e_1^{\top})_*a_{q^t-1}=1\in
(\ph_1^{\top})_{2(q^t-1)}(\pt)\cong\q$ so
$(e_1^{\top})_*p_*[X^{\top}]_{\MU}=(1/q)s_{q^t-1}(-T_X)+
 (e_1^{\top})_*(\decomposables)\mod q$. The decomposable elements of
$\MU_{2(q^t-1)}(\pt)$ are divided in two classes: those whose
Hurewicz image is not divisible by $q$ and the others. If $f(a_i)$
is a monomial of the first kind, then $(e_1^{\top})_*f(a_i)$ can be
any value of $\q$, if the lifting $e_1$ is chosen appropriately.
In fact, by adding a suitable map
$\MU\smash{\stackrel{c}{\to}}\Sigma^{2(q^t-1)}\smash{\eilmac^{\top}}\to
\smash{\ph_1^{\top}}$ to our choice of $\smash{e_1^{\top}}$, we can make
$\smash{(e_1^{\top})_*}f(a_i)$ to be an arbitrary value. Just set
$c\co \MU\to\Sigma^{2(q^t-1)}\smash{\eilmac^{\top}}$ to be a multiple of the
dual of any $b_\alpha$, whose coefficient in the expression of
$h(f(a_i))$ is not divisible by $q$. In the case $f(a_i)$ is
decomposable and $h(f(a_i))$ is divisible by $q$, then
$(e_1)_*f(a_i)=0$ for any choice of $e_1$. Thus, we have a choice
of $e_1^{\top}$ with the property that
$(e_1^{\top})_*p_*[X^{\top}]_{\MU}
=(1/q)s_{q^t-1}(-T_X)\mod q$. Since
$t_{\mathbb{C}}\co (\ph_1)_{2*,*}(\Spec \mathbb{C})\to
(\ph_1^{\top})_{2*}(\pt)$ is an isomorphism, to get the same result
in the algebraic setting it suffices to consider the lifting
$e_1+c$. Finally, the equality
$s_{q^t-1}(-T_X)=-s_{q^t-1}(T_X)$ follows from the fact that
$s_{i}$ are dual to indecomposable classes. This
finishes the proof of the \fullref{bir}.
\end{proof}

The existence of a topological realization functor in the case of the
base field $k$ embedding in $\mathbb{C}$ enables us to use
constructions we know to exist in topology, such as the
Thom--Pontryagin. If we wish to prove \fullref{bir} over a finite
or, more in general, perfect field $k$, then the scheme of the proof
is similar, but we cannot shift the argument to topology and we are
forced to exclusively use algebraic homotopy categories. This involves
the developement of such constructions of topological origin
in this algebraic setting. The details of them are
written in \cite{io-grado}.

In general, there is no canonical choice of the polynomial
generators $a_i$ of $\MU_{2*}(\pt)$, but $(e_1)_*p_*[X]_\mgl$ does
not depend on such choices, thus these numbers should be
expressible only in function of $e_1$. This dependence appears just
in presence of $a_i$ with the property that $h(a_i)$ is not
divisible by the prime number $q$ we are considering. The classes
$a_i$ can be all chosen to be $p_*[M]_\MU$ for certain smooth
projective complete intersections $M$. It follows that the
coefficients of $h(a_i)$ are explicitely computable and so do the
numbers $(e_1)_*p_*[X]_\mgl$ for each choice of $e_1$. For
instance, working at the prime $q=3$ and $t=1$, we are looking at
$\MU_4(\pt)$ which is generated as free abelian group by the
classes $a_2$ and $a_1^2$ where $a_2$ can be canonically chosen as
$p_*[M^2]_\MU$ for any smooth cubic surface $M^2$ in
$\mathbb{P}^3$ and $a_1$ as $p_*[M^1]_\MU$ for any smooth quadric
curve in $\mathbb{P}^2$. Carring out the computations we get, if
$X$ is a smooth, projective algebraic variety over a perfect
field,
\begin{equation}
(e_1)_*p_*[X]_\mgl=
\begin{cases}
\deg(X)\mod 3 & \text{if} \dim(X)=0\\
-\bigl(\tfrac{1}{3}-\tfrac{\lambda}{2}\bigr)\deg s_{(0,1)}(T_X)\\
\qua+\tfrac{\lambda}{4}\deg\bigl(s_{(1)}^2(T_X)-s_{(2)}(T_X)\bigr)\mod 3 &
\text{if X is a surface}\\ 0  & \text{otherwise}
\end{cases}
\end{equation}
where $\lambda$ can be any integer. This parameter reflects the
dependence of these numbers by the choice of the lifting $e_1$. In
particular, setting $\lambda=0$, we recover the formula of \fullref{bir}. This unusual way of getting characteristic numbers of
smooth and projective varieties, implicitly provides information on
divisibility of characteristic numbers. In fact, any number
obtained with this construction is necessarily a reduction modulo
$q$ of a linear combination with rational coefficients of
characteristic numbers with integral values. In the above example,
we conclude that
\begin{equation}\label{divis}
-(1/3)+\lambda/2)\deg s_{(0,1)}(T_X)+(\lambda/4)
(\deg(s_{(1)}^2(T_X)-s_{(2)}(T_X))
\end{equation}
 is always an
integer for any integer $\lambda$. For $\lambda=0$ we get that
$\deg s_{(0,1)}(T_X)$ is always divisible by $3$ for any smooth
projective surface over a characteristic zero field.

\section{Divisibility of characteristic numbers and degree formulae}

In a sense, the $\mathbb{Q}$ linear relations between such characteristic
numbers are {\it maximal} among those with integral value: for
instance, let fix a characteristic zero field $k$; if we consider the
rational numbers $(1/qi)\deg
s_{q^t-1}(T_X)$ for each integer $i\neq 1$
or $-1$ there exists a smooth, projective algebraic variety over $k$
such that $(1/qi)\deg s_{q^t-1}(T_X)$ is
a nonintegral rational number. The smooth, projective algebraic
varieties $Y$, with the property that
$(e_1)_*p_*[Y]_\mgl$ is not further divisible by the prime number $q$
in question, are of very special kind. The existence of any (rational)
morphism $f$ from such $Y$ to any smooth projective variety $X$ gives
information on algebraic cycles of $X$ and the degree of $f$. Indeed
one of the following two statements must hold: (i)
the number $(e_1)_*p_*[X]_\mgl$ is nonzero and the degree of $f$ is
not divisible by $q$, or (ii) $X$ has
a closed point $x$ with $[k(x):k]$ not divisible by $q$. A {\it
  rational morphism} $Y\to X$ is a map $U\to X$ where $U\subset X$ is
a dense open set. This is a
consequence of the so called {\it degree formula}: let $t_n(W)$ denote
$(e_1)_*p_*[W]_\mgl$ for a smooth, projective algebraic variety $W$ of
pure dimension $n$,
\begin{theorem}\label{ciao}
Let $f$ be a rational morphism $Y\longrightarrow X$
between smooth, projective algebraic varieties of pure dimension
$n$  over a characteristic zero field $k$. Then the relation
\begin{equation}
t_n(Y)=(\deg f) t_n(X)
\end{equation}
holds in $(\q)/I(X)$, where $I(X,q)=\langle [k(x):k]\mod
q\backslash x: \text{ closed point of } X\rangle$ if
$\dim(Y)=\dim(X)=q^t-1$, and $I(X,q)=0$ otherwise.
\end{theorem}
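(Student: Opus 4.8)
The plan is to combine \fullref{bir}, which identifies $t_n(W)$ with $-\tfrac1q\deg s_{q^t-1}(T_W)\bmod q$, with a localization argument in algebraic cobordism. First observe that only the case $n=q^t-1$ carries content: if $n\notin\{0,q^t-1\}$ then $t_n(Y)=t_n(X)=0$ by \fullref{bir} and $I(X,q)=0$, so the assertion is $0=0$, while the zero--dimensional case reduces to the multiplicativity of $\deg$ under finite morphisms. So assume $n=q^t-1$. Since $k$ has characteristic zero I would then dispose of the word \emph{rational}: resolution of indeterminacy yields a smooth projective $\tilde Y$ with a birational morphism $\pi\co \tilde Y\to Y$ through which $f$ becomes an everywhere defined morphism $\tilde f\co \tilde Y\to X$ with $\deg\tilde f=\deg f$. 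Because $p_{\tilde Y}=p_X\circ\tilde f$, we have $t_n(\tilde Y)=(e_1)_*p_*\tilde f_*[\tilde Y]_\mgl$, so it suffices to prove $t_n(\tilde Y)\equiv(\deg f)\,t_n(X)$ and $t_n(\tilde Y)\equiv t_n(Y)$, both modulo $I(X,q)$.

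The core step is to isolate the \emph{defect} $\tilde f_*[\tilde Y]_\mgl-(\deg f)[X]_\mgl$ inside $\mgl_{2n,n}(X)$. By generic flatness there is a dense open $V\subseteq X$, with closed complement $Z=X\setminus V$ of dimension $<n$, over which $\tilde f$ is finite and flat of degree $d=\deg f$; hence $\tilde f_*[\tilde f^{-1}V]_\mgl=d\,[V]_\mgl$ and the defect restricts to $0$ on $V$. Using the identification of $\mgl_{2*,*}$ with Levine--Morel algebraic cobordism $\Omega_*$ in characteristic zero and the right--exact localization sequence $\Omega_n(Z)\to\Omega_n(X)\to\Omega_n(V)\to0$, I obtain a class $\gamma\in\Omega_n(Z)$ with $\tilde f_*[\tilde Y]_\mgl-d\,[X]_\mgl=\iota_*\gamma$, where $\iota\co Z\hookrightarrow X$. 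Pushing to $\Spec k$ and applying $(e_1)_*$ turns the desired congruence into the single membership
$$t_n(\tilde Y)-(\deg f)\,t_n(X)=(e_1)_*p_*\iota_*\gamma\in I(X,q).$$

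It remains to prove this membership, and this is where I expect the real difficulty to lie. I would prove the key lemma that $(e_1)_*p_*$ annihilates, modulo $I(X,q)$, any cobordism class carried to $X$ by a regular morphism whose image has dimension $<n$. Reducing to generators $[h\co W\to X]$ with $W$ smooth projective of dimension $n=q^t-1$ and $\dim h(W)<n$, I would filter by the dimension of the image. When the image is a closed point $x$, the variety $W$ is a $k(x)$--variety, so $\deg_k s_{q^t-1}(T_W)=[k(x):k]\cdot\deg_{k(x)}s_{q^t-1}(T_{W})$; since $\tfrac1q\deg_{k(x)}s_{q^t-1}$ is an integer, $(e_1)_*p_*[W]_\mgl=[k(x):k]\cdot(\text{integer})\bmod q$ lies in $I(X,q)$ by definition. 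The positive--dimensional strata are handled by the fibration structure of $h$ and the primitivity of $s_{q^t-1}$, which expresses its value on a class fibred over a positive--dimensional base through the $s$--numbers of the fibres weighted by the base, together with the compatibility of $(e_1)_*p_*$ with base change along $k\hookrightarrow k(x)$; this reduces everything to the point--supported case. Applied to $\gamma\in\Omega_n(Z)$ with $Z\subseteq X$, whose point degrees are genuine degrees $[k(x):k]$ of closed points of $X$, this gives the displayed membership.

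Finally, the resolution correction $t_n(\tilde Y)-t_n(Y)$ is, by the blow--up formula, represented by a cobordism class supported over the smooth centres of $\pi$; since these centres map to $X$ through the \emph{regular} morphism $\tilde f$, the residue degrees of their points are multiples of generators of $I(X,q)$, and $I(X,q)$ is a subgroup of $\q$, so the same key lemma places this correction in $I(X,q)$ as well. Combining the two congruences yields the formula. The hard part is precisely this key lemma, identifying the $t_n$--value of lower--dimensional--supported cobordism classes with the point--degree ideal; it is the algebraic substitute, valid over an arbitrary characteristic zero field, for the topological degree formula used implicitly in \fullref{bir}, and it is where resolution of singularities and norm/transfer arguments genuinely enter.
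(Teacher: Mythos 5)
Your reductions (disposing of $n\notin\{0,q^t-1\}$, resolving indeterminacy) are fine, but the core of your argument contains a false step. You claim that because $\tilde f$ is finite and flat of degree $d$ over a dense open $V\subseteq X$, one has $\tilde f_*[\tilde f^{-1}V]_\mgl=d[V]_\mgl$, so that the defect $\tilde f_*[\tilde Y]_\mgl-d[X]_\mgl$ restricts to zero on $V$ and hence comes from $\Omega_n(Z)$ by localization. The identity ``finite flat pushforward of the fundamental class equals degree times the fundamental class'' holds in Chow groups but fails in algebraic cobordism --- its failure is exactly why degree formulae have content. Counterexample: let $X$ be an Enriques surface over $\mathbb{C}$ and $f\co Y\to X$ its K3 double cover, which is finite \emph{\'etale} of degree $2$ everywhere, so $V=X$ and $Z=\emptyset$. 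If $f_*[Y]=2[X]$ held in $\Omega_2(X)$, then applying the canonical natural transformation to $K$--theory (an oriented theory, so the transformation commutes with projective pushforward) would give $[f_*\mathcal{O}_Y]=2[\mathcal{O}_X]$ in $K_0(X)$; but $f_*\mathcal{O}_Y\cong\mathcal{O}_X\oplus\omega_X$ with $\omega_X$ a nontrivial $2$--torsion line bundle, and taking determinants shows $[\mathcal{O}_X]+[\omega_X]\neq 2[\mathcal{O}_X]$. What \emph{is} true --- that the defect is supported on \emph{some} proper closed subset --- is Levine--Morel's generalized degree formula, a deep theorem resting on resolution of singularities and the structure theory of $\Omega_*$; it cannot be extracted from generic flatness, so your class $\gamma$ has no source. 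Moreover, even granting that step by quoting Levine--Morel, your remaining ``key lemma'' (that $(e_1)_*p_*$ kills, modulo $I(X,q)$, every class carried by a morphism with lower--dimensional image) is precisely the computational core of the Rost/Merkurjev/Levine--Morel proofs, and your sketch of it (filtration by image dimension, ``fibration structure'', ``primitivity'') is not a proof, as you acknowledge.

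The paper's proof bypasses both difficulties, and this is the entire point of the spectrum $\ph_1$: one never analyses the defect in $\mgl$--homology at all. Everything is pushed into $(\ph_1)$--homology, where the triangle $\Sigma^{2(q^t-1),q^t-1}\eilmac\to\ph_1\to\eilmac$ gives the long exact sequence $CH_{*-q^t+1}(-)\otimes\q\to(\ph_1)_{2*,*}(-)\to CH_*(-)\otimes\q$. Since $f_*[Y]_{CH}=(\deg f)[X]_{CH}$ (the classical Chow--level statement, where flatness and properness \emph{do} suffice), the class $f_*(e_1)_*[Y]_\mgl-(\deg f)(e_1)_*[X]_\mgl$ dies in $CH_n(X)\otimes\q$ and therefore lifts to $CH_{n-q^t+1}(X)\otimes\q$; pushing forward to $\Spec k$ and using injectivity of $CH_*(\Spec k)\otimes\q\to(\ph_1)_{2*,*}(\Spec k)$, the error term lands exactly in $I(X,q)$ when $n=q^t-1$ (it is the image under the degree map of a class of zero--cycles on $X$) and in $0$ otherwise. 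No cobordism--level support statement and no key lemma are needed; the rational case is then handled, much as you do, by resolving $f$ and using birational invariance of $I(X,q)$. Two smaller points: \fullref{bir} plays no role in the proof of \fullref{ciao} --- the degree formula holds for every lifting $e_1$ and every characteristic zero field, whereas \fullref{bir} requires $k\hookrightarrow\mathbb{C}$ and a particular choice of $e_1$ --- and the vanishing of $t_n$ for $n\notin\{0,q^t-1\}$ follows from $(\ph_1)_{2n,n}(\Spec k)=0$, read off from the same exact sequence, not from \fullref{bir}.
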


\begin{proof}
We first assume that the morphism $f$ is regular.
Applying the homology long exact sequences originating
from the exact triangle
\begin{equation}
\xymatrix{\Sigma^{2(q^t-1),q^t-1}\eilmac\ar[r] & \ph_1\ar[r] &\eilmac}
\end{equation}
to the commutative diagram
\begin{equation}
\xymatrix{Y\ar[r]^f\ar[dr]^{p_Y}& X\ar[d]^{p_X}\\
& \Spec k}
\end{equation}
we obtain the commutative diagram of groups:
{\small
\begin{equation}
\renewcommand{\theequation}{\normalsize\arabic{equation}}
\begin{split}
\xymatrix{\cdots \to CH_{*-q^t+1}(Y)\otimes\q\ar[r]\ar[d]^{f_*} &
  (\ph_1)_{2*,*}(Y)\ar[r]\ar[d]^{f_*} &
  CH_*(Y)\otimes\q\to\cdots\ar[d]^{f_*}\\
\cdots \to CH_{*-q^t+1}(X)\otimes\q\ar[r]\ar[d]^{{p_X}_*} &
  (\ph_1)_{2*,*}(X)\ar[r]\ar[d]^{{p_X}_*} &
  CH_*(X)\otimes\q\to\cdots\ar[d]^{{p_X}_*}\\
  CH_{*-q^t+1}(\Spec k)\otimes\q\ar[r] &
  (\ph_1)_{2*,*}(\Spec k)\ar[r] & CH_*(\Spec k)\otimes\q}
\end{split}
\end{equation}
}
The class $f_*(e_1)_*[Y]_\mgl-(\deg f) (e_1)_*[X]_\mgl\in
(\ph_1)_{2n,n}(X)$ lies in the image of some class in
$CH_{n-q^t+1}(X)\otimes\q$, because $f_*[Y]_{CH}=(\deg f)[X]_{CH}$ and
$(e_1)_*[W]_\mgl=[W]_{CH}$ for any choice of $e_1$. On the other hand,
$${p_X}_*(f_*(e_1)_*[Y]_\mgl-(\deg f) (e_1)_*[X]_\mgl)=t_n(Y)-(\deg
f)t_n(X)$$ and ${p_X}_*\co CH_0(X)\otimes\q\to CH_0(\Spec k)\otimes\q$ has
$I(X,q)$ as image. Since $CH_{*-q^t+1}(\Spec k)\otimes\q\to
  (\ph_1)_{2*,*}(\Spec k)$ is always injective, the statement of
\fullref{ciao} for regular morphisms follows. The case of rational
morphism is treated by decomposing the rational morphism $f$ as the
diagram
$$\xymatrix{Y'\ar[d]^g\ar[dr]^{f'} & \\
Y\ar@{.>}[r]^f & X}
$$
where both $g$ and $f'$ are regular morphisms and $g$ is a birational
equivalence. Then we apply the regular version of the theorem to $g$
and $f'$ and use that $I(X)$ is birational invariant for smooth
projective varieties.
\end{proof}

\fullref{bir} along with the description of the numbers given in
\fullref{ciao}, yields a formula
refined enough to prove some results about quadrics derived by Rost
(see Merkurjev \cite{rost-deg-for}).

\begin{theorem}
Let $Q$ be a smooth projective quadric of dimension $d\geq 2^m-1$ and
let $X$ be a smooth variety over $k$ such that $I(X,2)=0$ and
admitting a rational map $f\co \xymatrix{Q\ar@{.>}[r] & X}$. Then,
\begin{enumerate}
\item $\dim(X)\geq 2^m-1$;
\item if $\dim(X)=2^m-1$, there is a rational map
$\xymatrix{X\ar@{.>}[r] & Q}$.
\end{enumerate}
\end{theorem}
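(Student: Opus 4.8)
The plan is to run everything at the prime $q=2$ and exponent $t=m$, so that the critical dimension is $q^t-1=2^m-1=:n$, and to play off the invariant $t_n$ against the degree formula \fullref{ciao}. The starting point is a Chern-number computation for quadrics: for a smooth quadric $Y^n\subset\mathbb{P}^{n+1}$ one has $c(T_Y)=(1+h)^{n+2}/(1+2h)$ with $h$ the hyperplane class, so the Newton (power-sum) class is $s_n(T_Y)=(n+2-2^n)h^n$; since $\int_Y h^n=2$ this gives $\deg s_{2^m-1}(T_Y)=2(n+2-2^n)$, whence by \fullref{bir} $t_n(Y)=-\tfrac12\deg s_{2^m-1}(T_Y)\equiv n\equiv 1\pmod 2$. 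Thus \emph{every} smooth quadric of dimension $n=2^m-1$ has $t_n=1\neq 0$, i.e.\ $4\nmid\deg s_{2^m-1}(T_Y)$; this is the quadric-specific input and the reason the argument runs at $2$. I work with $X$ smooth and projective, as is needed to apply \fullref{ciao}. I also dispose of the isotropic case once: if $Q$ had a $k$-point it would be $k$-rational, and composing a birational map $\mathbb{P}^d\dashrightarrow Q$ with $f$ and evaluating at a $k$-point in the dense domain would produce a degree-$1$ point of $X$, contradicting $I(X,2)=0$. Hence $Q$, and every subquadric of it, is anisotropic, so all such subquadrics $Y$ satisfy $I(Y,2)=0$ by Springer's theorem.

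For part (1) I argue by contradiction, assuming $\dim X<n$. Since $\dim Q=d\geq n$, a general linear section $Y=Q\cap L$ is a smooth anisotropic subquadric of dimension exactly $n$, and $f$ restricts to a rational map $Y\dashrightarrow X$. Resolving its indeterminacy (characteristic $0$) gives a smooth projective $\tilde Y$ with a birational morphism $\tilde Y\to Y$ and an everywhere defined morphism $\bar f\co\tilde Y\to X$; applying \fullref{ciao} to the degree-one $\tilde Y\dashrightarrow Y$ with $I(Y,2)=0$ yields $t_n(\tilde Y)=t_n(Y)=1$, so $4\nmid\deg s_{2^m-1}(T_{\tilde Y})$. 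If $\dim X=0$, then $X=\Spec L$ and $\bar f$ places $L$ inside the field of constants of $\tilde Y$, which is $k$ because $\tilde Y$ is geometrically integral; thus $L=k$, again contradicting $I(X,2)=0$. If $0<\dim X<n$, I invoke Voevodsky's observation recalled in the introduction (a consequence of the degree formula) applied to $\bar f\co\tilde Y\to X$: since $I(X,2)=0$ excludes an odd-degree closed point, the only surviving alternative is that $\bar f$ is surjective with $\dim X=\dim\tilde Y=n$, contradicting $\dim X<n$. Hence $\dim X\geq 2^m-1$.

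For part (2) assume $\dim X=n$. By part (1) applied to the closure of the image, $f$ is dominant, so a general dimension-$n$ linear section $Y=Q\cap L$ maps by $f$ dominantly and generically finitely onto $X$ (the general fibre of $f$ has dimension $d-n$ and meets the codimension-$(d-n)$ section $Y$); put $\delta=\deg(f|_Y)=[k(Y):k(X)]$. As $Y$ and $X$ have equal pure dimension $n$, \fullref{ciao} gives $t_n(Y)=\delta\,t_n(X)$ in $({\mathbb Z}/2)/I(X,2)={\mathbb Z}/2$, and $t_n(Y)=1$ forces $\delta$ to be \emph{odd} (incidentally $t_n(X)=1$). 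Now set $K=k(X)$: the fibre of $f|_Y$ over the generic point $\Spec K$ is the single closed point $\Spec k(Y)$ of $Q_K:=Q\times_k K$, of degree $[k(Y):K]=\delta$, which is odd. By Springer's theorem an anisotropic quadric over $K$ has no closed point of odd degree, so $Q_K$ is isotropic; a $K$-rational point of $Q$ is precisely a rational map $X\dashrightarrow Q$, which finishes the proof.

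The main obstacle is the conceptual step in part (2): converting the purely numerical conclusion ``$\deg f$ is odd'' extracted from the degree formula into the \emph{geometric} statement that the base-changed quadric $Q_{k(X)}$ acquires an odd-degree closed point, after which Springer's theorem upgrades oddness to isotropy and hence to the desired rational section. Two supporting technical points must be handled with care: the general-position fact that a general linear section of dimension $\dim X$ of the dominant $f$ remains dominant and generically finite, so that $\delta$ is well defined and the generic fibre is $0$-dimensional; and the identification of that generic fibre with the single point $\Spec k(Y)$, which is what makes its degree equal to $\delta$ rather than merely bounded by it. By contrast, the Chern-number computation $t_n(Y)=1$—the assertion that quadrics of dimension $2^m-1$ realize the maximal $2$-divisibility of $s_{2^m-1}$—is routine, though it is exactly the feature that makes the whole mechanism operate.
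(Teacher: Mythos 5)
Your overall strategy coincides with the paper's: compute $t_{2^m-1}$ of a $(2^m-1)$--dimensional subquadric and find it equals $1\bmod 2$ (your total-Chern-class calculation is equivalent to the paper's computation via the normal bundle and Euler sequences), feed this into the degree formula of \fullref{ciao}, and for part (2) convert oddness of the degree into an odd-degree closed point of $Q\times_k\Spec k(X)$ and invoke Springer's theorem; that half of your argument is essentially the paper's. The genuine gap is in part (1), in the crucial case $0<\dim X<2^m-1$: there you do not argue from \fullref{ciao} at all, but cite ``Voevodsky's observation recalled in the introduction''. That observation is stated in the introduction \emph{without proof} (the introduction only remarks that it follows from a degree formula), and the degree formula actually proved in this paper, \fullref{ciao}, applies only to rational maps between smooth projective varieties \emph{of the same pure dimension} $n$; as it stands it says nothing about a morphism to an $X$ of strictly smaller dimension. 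The missing idea --- exactly what the paper supplies at this point --- is the dimension-equalizing trick: compose the restriction of $f$ to the $(2^m-1)$--dimensional subquadric $Y$ with the inclusion $X\hookrightarrow X\times\mathbb{P}_k^{2^m-1-\dim X}$, observe that $I(X\times\mathbb{P}^i,2)=I(X,2)=0$ and that the composite has degree $0$ because it cannot be dominant, and then apply \fullref{ciao} to get $t_{2^m-1}(Y)=0$ in $\mathbb{Z}/2$, contradicting $t_{2^m-1}(Y)=1$. Without this (or an equivalent) reduction, your part (1) rests on an assertion whose derivation from \fullref{ciao} is precisely the step at issue, so in the context of this paper it is not a proof.

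Two secondary blemishes, both repairable. First, the resolution-of-indeterminacy detour, and with it the preliminary argument that $Q$ and all its subquadrics are anisotropic (which you need only to ensure $I(Y,2)=0$), is unnecessary: \fullref{ciao} is stated and proved for \emph{rational} maps between smooth projective varieties, so it applies directly to the restriction of $f$ to $Y$. Second, in part (2) you deduce dominance of $f$ from ``part (1) applied to the closure of the image'', but that closure need not be smooth, so part (1) does not apply to it. Fortunately this step is also superfluous: if $f|_Y$ were not dominant its degree would be $0$, and \fullref{ciao} would give $1=t_{2^m-1}(Y)=0$ in $\mathbb{Z}/2$, a contradiction; hence dominance, generic finiteness, and the oddness of $\delta=[k(Y):k(X)]$ all come from the same application of the formula, after which your identification of the generic fibre with $\Spec k(Y)$ and the Springer argument are exactly the paper's.
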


\begin{proof}
Choose a subquadric $Q'\subset Q$ of dimension $2^m-1$ so
that the restriction of $f$ to $Q'$ is a rational morphism. We will
start with proving the theorem by assuming that
$t_{2^m-1}(Q')\neq 0\mod 2$ and then we will show this
assertion. For the first part we can use Rost's proof, which we
rewrite here. If
$\dim(X)<2^m-1$, then we consider the composition $f'$
\begin{equation}
\xymatrix{Q'\ar@{.>}[r]^f & X\ar[r] & X\times
  \mathbb{P}_k^{2^m-1-\dim(X)}}
\end{equation}
to which we can apply the degree formula of \fullref{ciao}. Notice
that $I(X)=I(X\times\mathbb{P}^i)$ for any $i$. Since $\deg(f')=0$,
to show the first statement, it suffices to prove that
$t_{2^m-1}(Q')\neq 0$. The same formula implies that
$\deg(f)$ is odd. In the case the dimension of $X$ equals $2^m-1$,
this can be rephrased by saying that the quadric
$Q''=Q'\times_k \Spec k(X)$ has a point over $k(X)$ of odd
degree (namely its generic point as variety over $k(X)$). In turn,
this is equivalent for $Q''$ to become isotropic over an odd degree
field extension of $k(X)$. By Springer's Theorem then $Q''$ is
isotropic over $k(X)$, that is, there exists a rational morphism
$g\co \xymatrix{X\ar@{.>}[r] & Q''}$. The rational morphism of the second
statement of the theorem is the composition
$$\xymatrix{X\ar@{.>}[r]^g & Q''\ar[r]^{\mathrm{can}} &
Q'\ar@{^(->}[r]& Q}$$
To prove that $t_{2^m-1}(Q')\neq 0$ for
a smooth, projective quadric $Q'$, we use \fullref{bir} and the
properties of the classes $s_{i}$: (a)
$s_{i}(A+ B)=s_{i}(A)+
s_{i}(B)$ for two virtual bundles $A$ and $B$
and (b) $s_{i}(L)=c_1(L)^i$ for a
line bundle $L$. By definition, the quadric $Q'$ is determined by a
degree two homogeneous equation in $2^m$ variables, thus its tangent
bundle fits in the short exact sequence
\begin{equation}
0\to T_{Q'}\to j^*T_{\mathbb{P}^{2^m}}\to j^*\mathcal{O}(2)\to 0
\end{equation}
where $j\co Q'\hookrightarrow\mathbb{P}^{2^m}$ is the closed embedding.
Property (a) implies that
$$s_{i}(T_{Q'})=s_{i}(j^*T_{\mathbb{P}^{2^m}})-
s_{i}(j^*\mathcal{O}(2)).$$
Property (b) yields
$$s_{i}(j^*\mathcal{O}(2))=j^*
s_{i}(\mathcal{O}(2))=j^*c_1^i(\mathcal{O}(2))=j^*2^ic_1^i
\mathcal{O}(1)=2^iQ'\cap c_1^i\mathcal{O}(1).$$
To compute $s_{i}(j^*T_{\mathbb{P}^{2^m}})$ we use the
short exact sequence of vector bundles over $\mathbb{P}^{2^m}$
$$0\to T_{\mathbb{P}^{2^m}}\to\oplus_{2^m+1}\mathcal{O}(1)\to
\mathcal{O}\to 0$$
We get
\begin{multline*}
s_{i}(j^*T_{\mathbb{P}^{2^m}})=s_{i}
(j^*\oplus_{2^m+1}\mathcal{O}(1))-s_{i}(j^*\mathcal{O}) \\
= (2^m+1)j^*c_1^i\mathcal{O}(1)=(2^m+1)Q'\cap c_1^i\mathcal{O}(1).
\end{multline*}
Let now $i=2^m-1$ and compute the degree of the zero cycle that we
get
$$\deg(s_{2^m-1}(T_{Q'}))=(2^m+1)\deg(Q'\cap
c_1^i\mathcal{O}(1))-2^{2^m-1}\deg(Q'\cap c_1^i\mathcal{O}(1))$$
Since $c_1\mathcal{O}(1)$ is the class of an hyperplane in
$\mathbb{P}^{2^m}$, we have that
$\deg(Q'\cap c_1^i\mathcal{O}(1))=2$. Hence we conclude that
\begin{align*}
t_{2^m-1}(Q') &=-(1/2)\deg(s_{2^m-1}(T_{Q'}))\mod 2 \\
&=-(2^m+1-2^{2^m-1})\mod 2 \\
&=1\mod 2
\end{align*}
which completes the proof.
\end{proof}

\begin{corollary}[Hoffmann]
Let $Q_1,Q_2$ be two anisotropic quadrics.
If $\dim(Q_1)\geq
2^m-1$ and $Q_2$ is isotropic over $k(Q_2)$, then $\dim(Q_2)\geq
2^m-1$.
\end{corollary}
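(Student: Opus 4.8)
The plan is to recognize this corollary as (the key case of) Hoffmann's theorem and to deduce it directly from the theorem on quadrics established just above, applied with $Q=Q_1$ and $X=Q_2$. First I would translate the isotropy hypothesis: I read it as asserting that $Q_2$ becomes isotropic over the function field $k(Q_1)$ of the other quadric (isotropy over its own function field $k(Q_2)$ being automatic, from the generic point). To say that $Q_2$ acquires a $k(Q_1)$--rational point is exactly to give a morphism $\Spec k(Q_1)\to Q_2$, and since $k(Q_1)$ is the function field of $Q_1$ this is the same datum as a rational map $Q_1\dashrightarrow Q_2$. Both $Q_1$ and $Q_2$ are smooth projective, being anisotropic (hence nondegenerate) quadric hypersurfaces, and $\dim(Q_1)\geq 2^m-1$ by assumption; so once the remaining hypothesis $I(Q_2,2)=0$ is verified, part (1) of the quadric theorem applies to $Q_1\dashrightarrow Q_2$ and yields $\dim(Q_2)\geq 2^m-1$, which is the assertion.

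The one substantive step is therefore to check that $I(Q_2,2)=0$, that is, that every closed point of $Q_2$ has even degree. This is precisely where the anisotropy of $Q_2$ is used, via Springer's theorem (already invoked in the preceding proof). Indeed, a closed point $\Spec L\to Q_2$ with $[L:k]$ odd would exhibit $Q_2$ as isotropic over the odd--degree extension $L/k$, and Springer's theorem would then force $Q_2$ to be isotropic already over $k$, contradicting the hypothesis. Hence all closed points of $Q_2$ have even degree, so $I(Q_2,2)=\langle [k(x):k]\bmod 2 : x\text{ closed point}\rangle=0$, as needed to meet the hypotheses of the theorem.

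With these two reductions in hand the deduction is immediate, and I expect the only real subtlety to lie in the two translations surrounding the hypotheses: reading ``$Q_2$ isotropic over $k(Q_1)$'' as the existence of a rational map $Q_1\dashrightarrow Q_2$, and reading the anisotropy of $Q_2$ as the vanishing of the obstruction ideal $I(Q_2,2)$ through Springer's theorem. Both are standard, but they are the points where all the content of the statement actually resides; everything else is a direct appeal to the previously proved theorem. It is worth noting at the end that the numerical conclusion $\dim(Q_2)\geq 2^m-1$ is exactly Hoffmann's dimension bound in quadric form, since passing between the quadric and the underlying quadratic form shifts dimensions by $2$.
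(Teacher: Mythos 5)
Your proof is correct and is exactly the deduction the paper intends: the corollary is stated there without proof, as an immediate consequence of the preceding theorem on quadrics applied with $Q=Q_1$ and $X=Q_2$. Your two supporting observations --- reading the hypothesis (which contains an evident typo) as isotropy of $Q_2$ over $k(Q_1)$, so that it supplies the required rational map from $Q_1$ to $Q_2$, and deducing $I(Q_2,2)=0$ from the anisotropy of $Q_2$ via Springer's theorem --- are precisely the steps the paper leaves implicit.
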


\begin{corollary}[Izhboldin] Let $Q_1,Q_2$ be two anisotropic
  quadrics. If $\dim(Q_1)=\dim(Q_2)=2^m-1$ and $Q_2$ is isotropic over
  $k(Q_1)$, then $Q_1$ is isotropic over $k(Q_2)$.
\end{corollary}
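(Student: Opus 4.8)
The plan is to deduce this directly from the preceding theorem on quadrics, taking $Q := Q_1$ and $X := Q_2$. The only real work is to translate the two isotropy statements into the language of rational maps and to verify the hypothesis $I(Q_2,2)=0$ that is needed before that theorem can be applied.

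First I would record the standard dictionary between isotropy over a function field and rational maps. To say that $Q_2$ is isotropic over $k(Q_1)$ is to say that $Q_2\times_k\Spec k(Q_1)$ has a $k(Q_1)$--rational point; since $k(Q_1)$ is the function field of the integral variety $Q_1$, such a point is the same datum as a rational morphism $Q_1\longrightarrow Q_2$. Symmetrically, a rational morphism $Q_2\longrightarrow Q_1$ is exactly a $k(Q_2)$--point of $Q_1$, that is, isotropy of $Q_1$ over $k(Q_2)$. So the hypothesis furnishes a rational morphism $f\co Q_1\longrightarrow Q_2$, and the conclusion to be reached is precisely the existence of a rational morphism in the opposite direction.

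Next I would check that $I(Q_2,2)=0$, which is where Springer's Theorem enters. Recall that $I(Q_2,2)$ is generated by the residues $[k(x):k]\bmod 2$ over the closed points $x$ of $Q_2$. If $Q_2$ had a closed point of odd degree, it would acquire a rational point over an odd--degree extension of $k$ and hence become isotropic there; by Springer's Theorem it would then already be isotropic over $k$, contradicting the anisotropy of $Q_2$. Thus every closed point of $Q_2$ has even degree and $I(Q_2,2)=0$, as required. (Anisotropy of $Q_1$ plays no role in the argument; it merely keeps the conclusion from being vacuous.)

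With these two points in hand the corollary is immediate. Since $Q_1$ is a smooth projective quadric of dimension $2^m-1$ (so of dimension $\geq 2^m-1$), $Q_2$ is a smooth variety with $I(Q_2,2)=0$, and $f\co Q_1\longrightarrow Q_2$ is a rational morphism, the preceding theorem applies with $Q=Q_1$, $X=Q_2$. As $\dim Q_2=2^m-1$, we are in case (2) of that theorem, which produces a rational morphism $Q_2\longrightarrow Q_1$; by the dictionary above this says exactly that $Q_1$ is isotropic over $k(Q_2)$. The main obstacle here is conceptual rather than computational: fixing the roles of the two quadrics correctly (one plays $Q$, the other plays $X$) and invoking Springer's Theorem to secure the vanishing of the obstruction ideal $I(Q_2,2)$, after which the statement falls straight out of the degree-formula theorem with no further estimate.
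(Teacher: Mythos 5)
Your proof is correct and is exactly the derivation the paper intends: the corollary is stated as an immediate consequence of the preceding quadric theorem, applied with $Q=Q_1$ and $X=Q_2$, using the isotropy/rational-map dictionary and Springer's Theorem (to get $I(Q_2,2)=0$ from anisotropy of $Q_2$), both of which already appear inside the paper's proof of that theorem. Your observation that anisotropy of $Q_1$ is not actually needed (it only prevents the conclusion from being vacuous) is also accurate.
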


We may wonder now, if this process of obtaining degree formulae
may be generalized to other spectra.
The answer to this question is positive: the
spectra $\ph_r$ for $r>1$ constructed on characteristic zero fields
\cite{io} and on perfect fields \cite{io-perf}, can be used to
define new rational characteristic numbers for each $r$ and choice of lifting
$e_r\co \mgl\to \ph_r$, each of them appearing in certain degree formulae
\cite{io-grado}.

Summing up, we first used homotopy theory to define certain numbers
$(e_1)_*p_*[X]_\mgl$, expressed them as rational characteristic
numbers with integral values (\fullref{bir}) and showed that they
satisfy a degree formula (\fullref{ciao}). Lastly, we computed
them for smooth quadrics and used this degree formula to prove a
result of Rost, at least under the assumption on the base field $k$ of
being of characteristic zero.

\bibliographystyle{gtart}
\bibliography{link}

\end{document}